\theoremstyle{plain}
\newtheorem{theorem}{Theorem}[section]
\newtheorem{lemma}{Lemma}[section]
\newtheorem{claim}{Claim}[section]
\theoremstyle{definition}
\newtheorem{definition}{Definition}[section]
\newtheorem*{acknowledgment}{Acknowledgment}
\newtheorem{problem}{Problem}
\theoremstyle{remark}
\newtheorem{remark}{Remark}[section]
\numberwithin{equation}{section}
\newcommand{\LA}[1]{\refstepcounter{equation}\text{(\theequation)}\label{#1}}
\newcommand{\LAQ}[2]{\begin{itemize}[leftmargin=1cm]\item[\LA{#1}]{#2} \end{itemize}}
\numberwithin{equation}{section}
\newcommand{\void}{\varnothing}
\renewcommand{\subset}{\subseteq}
\newcommand{\abs}[1]{\left|#1\right|}
\newcommand{\R}{{\mathbb{R}}}
\newcommand{\Rn}{{\mathbb{R}^n}}
\newcommand{\set}[1]{\left\{#1\right\}}
\newcommand{\bra}[1]{\left(#1\right)}
\newcommand{\eps}{\epsilon}
\newcommand{\supp}{\mathrm{supp}}
\newcommand{\dist}{\mathrm{dist}}
\newcommand{\diam}{\mathrm{diam}}
\newcommand{\qt}[1]{``#1''}
\begin{document}

	\title{Nonnegative Whitney Extension Problem for $ C^1(\mathbb{R}^n) $}
	\author{Fushuai Jiang}
	
	\maketitle
	
	\newcommand{\jet}{J}
	\newcommand{\Rj}{\mathcal{R}}
	\newcommand{\G}{\Gamma}
	\renewcommand{\P}{\mathcal{P}}
	\newcommand{\ksh}{{k^{\sharp}}}
	\newcommand{\jm}{\odot}
	\newcommand{\m}{\mathfrak{m}}

	\begin{abstract}
		Let $ f $ be a real-valued function on a compact subset in $ \mathbb{R}^n $. We show how to decide if $ f $ extends to a nonnegative and $ C^1 $ function on $ \mathbb{R}^n $. There has been no known result for nonnegative $ C^m $ extension from a general compact set when $ m > 0 $. The nonnegative $ C^m $-extension problem for $ m \geq 2 $ remains open.
	\end{abstract}

	\section{Introduction}

	For $ m,n \geq 1 $, we write $ C^m(\Rn) $ to denote the vector space of continuously differentiable functions on $ \Rn $ whose derivatives up to $ m $-th order are bounded and continuous. We write $ C^m_+(\Rn) $ to denote the collection of elements in $ C^m(\Rn) $ that are also nonnegative on $ \Rn $.

	In this paper, we consider the following problem. 
	
	\begin{problem}[Nonnegative Whitney Extension Problem]\label{prob.W+}
		Let $ E \subset \Rn $ be compact. Let $ f : E \to [0,\infty) $. How can we decide if there exists $ F \in C^m_+(\Rn) $ with $ F = f $ on $ E $?
	\end{problem}

	
	When $ E $ is finite, \cite{JL20,FIL16+} provide solutions to Problem \ref{prob.W+} with further control on the size of the derivatives of the extension (an extension without derivative control always exists in this case). It is related to the $ C^m $ selection problem. However, when $ E $ is infinite, the strategies employed in \cite{JL20,FIL16+} collapse, because they rely on a Calder\'on-Zygmund decomposition procedure which may not terminate when $ E $ is infinite. There has been no known answer to {Problem \ref{prob.W+}} when $ E \subset \Rn $ is infinite.

	Problem \ref{prob.W+} is a variant the following classical problem posed by H. Whitney\cite{W34-1,W34-2,W34-3}.
	
	\begin{problem}[Whitney Extension Problem]\label{prob.W}
			Let $ E \subset \Rn $ be compact. Let $ f : E \to \R $. How can we decide if there exists $ F \in C^m(\Rn) $ with $ F = f $ on $ E $?
	\end{problem}
	
	In a series of papers \cite{F05-Sh,F05-J,F06}, C. Fefferman provided a solution to Problem \ref{prob.W}. A key ingredient in Fefferman's solution is the notion of ``Glaeser refinement'', inspired by \cite{G58,BMP03}. We briefly discuss the main idea of \cite{F06} here. 
	
	To each $ x \in E $, we assign an affine subspace $ H_f(x) \subset \P^m $, where $ \P^m $ denotes the polynomial of $ n $ variables of degree no greater than $ m $. 
	The subspace $ H_f(x) $ satisfies the following crucial property:
	\LAQ{}{If $ F \in C^m(\Rn) $ satisfies $ F = f $ on $ E $, then $ \jet_x^mF \in H_f(x) $.}
	Here, $ \jet_x^mF $ denotes the degree $ m $ Taylor polynomial of $ F $ about the point $ x $. For instance, we may take $ H_f(x) = \set{P \in \P^m : P(x) = f(x)} $.
	Then solving Problem \ref{prob.W} then amounts to the following problem.
	\LAQ{}{Decide if there exists $ F \in C^m(\Rn) $ such that $ \jet_x^mF \in H_f(x) $ for all $ x \in E $.}
	
	To achieve this goal, the author uses the procedure called \qt{Glaeser refinement} (See Definition \ref{def.GR}) on each of the subspace $ H_f(x) $, which produce another subspace $ \widetilde{H}_f(x) \subset H_f(x) \subset \P^m_n $ that possibly excludes some jets at $ x $ that {\em cannot} arise as the jets of a $ C^m $ function that agrees with $ f $ on $ E $. The author first shows that the Glaeser refinement stabilizes (i.e. the procedure does not produce new proper subspace) after a controlled number (depending only on $ m $ and $ n $) of times. The author then shows that if the stabilized subspace is nonempty for each $ x \in E $, then there exists $ F \in C^m(\Rn) $ with $ \jet_x^m F \in H_f(x) $ for all $ x \in E $, hence solving Problem \ref{prob.W}.
	
	In this paper, we adapt the technology described above to solve Problem \ref{prob.W+} for $ m = 1 $ (see Theorem \ref{thm.main} in Section \ref{sec.PMR}). To account for nonnegativity, we associate to each $ x \in E $ a subset
	\begin{equation*}
	\G_f(x) = \set{P \in \P^1 : \text{ there exists }F \in C^1_+(\Rn) \text{ such that } F(x) = f(x) \text{ and } \jet_x^1F = P }\,.
	\end{equation*}
	Solving Problem \ref{prob.W+} then amounts to deciding whether there exists $ F \in C^1_+(\Rn) $ such that $ \jet_x^1F \in \G_f(x) $ for each $ x \in E $. 
	
	To this end, we will apply Glaeser refinement to each of the subset $ \G_f(x) $. Following \cite{F06}, we will first prove that each subset $ \G_f(x) $ will eventually stabilize after a finite number of refinement. Next, we show that if, for each $ x \in E $, we start with $ \G_f(x) $ and arrive at some $ \G_*(x) \neq \void $ after a certain number of refinement, and that $ \G_*(x) $ is its own Glaeser refinement; then there exists $ F \in C^1_+(\Rn) $ such that $ \jet_x^1 F \in \G_f(x,\infty) $ for each $ x \in E $, hence solving Problem \ref{prob.W+} for $ m = 1 $. 
	
	This paper is part of a literature on extension and interpolation, going back to the seminal works of H. Whitney \cite{W34-1,W34-2,W34-3}. We refer to the interested readers to \cite{F05-Sh,F05-J,F06,FIL16,FIL16+} and references therein for the history and related problems. For further discussion on Glaeser refinement, we direct the readers to \cite{BMP06,BM07,KZ07}.

	\begin{acknowledgment}
		I am indebted to Kevin Luli for introducing me to this fascinating subject, and for his constant guidance and valuable suggestion. I thank Kevin O'Neill for his useful suggestion. I would also like to thank the participants of NSF-CBMS Conference: Fitting Smooth Functions to Data for valuable discussions.
	\end{acknowledgment}
	
	We will start from scratch and redefine all the notions.

	\section{Preliminaries and Main Results}
	\label{sec.PMR}
	Fix integers $ m,n \geq 0 $. 
	\begin{itemize}
		\item We will use Euclidean distance $ \abs{\cdot} $ on $ \Rn $. We use $ B(x,r) $ to denote the open ball of radius $ r $ centered at $ x $. 
		\item We use $ C^m_{loc}(\Rn) $ to denote  the vector space of $ m $-times continuously differentiable functions on $ \Rn $. We use $ C^m(\Rn) $ to denote the subspace of $ C^m_{loc}(\Rn) $ consisting of elements whose derivatives up to $ m $-th order are bounded on $ \Rn $. We use $ C^m_+(\Rn) $ to denote the convex subcollection of elements in $ C^m(\Rn) $ that are also nonnegative on $ \Rn $. 
		\item We use $ \P^m $ to denote the space of polynomials of $ n $ variables and degree less or equal to $ m $. For $ x \in \Rn $ and $ F \in C^m_{loc}(\Rn) $, we use $ \jet_x^mF $ to denote the $ m $-jet of $ F $ at $ x $, which we identify with the degree $ m $ Taylor polynomial of $ F $ at $ x $
		\begin{equation*}
		\jet_x^m F(y) := \sum_{\abs{\alpha}\leq m}
		\frac{\partial^\alpha F(x)}{\alpha !}(y-x)^\alpha\,.
		\end{equation*}
		We use $ \Rj_x^m $ to denote the ring of $ m $-jets at $ x $. It is clear that $ \Rj_x^m $ is isomorphic to $ \P^m $ as vector spaces, but we will distinguish them. 
		Let $ P,P' \in \Rj_x^m $, we define the jet product of $ P $ and $ P' $ in $ \Rj_x^m $ to be 
		\begin{equation*}
		P\odot_x^m P' := \jet_x^m (PP')\,.
		\end{equation*}
		
		\item We assume that $ \ksh $ is a sufficiently large integer depending only on $ m $ and $ n $. See \cite{F06} for an estimate of the size of $ \ksh $.
	\end{itemize}

	We first define the notion of Glaeser refinement.
	
	\begin{definition}\label{def.GR}
		Let $ E \subset \Rn $ be compact. For each $ x \in E $, suppose we are given a subset (not necessarily affine and possibly empty) $ \Phi_0(x) \subset \Rj_x^m $. For $ \ell \geq 0 $, we define each $ \Phi_{\ell+1}(x) $ inductively:
		
		Let $ x_0 \in E $, $ P_0 \in \Rj_{x_0}^m $, and $ \ell \geq 0 $, we say that $ P_0 \in \Phi_{\ell+1}(x_0) $ if
		\LAQ{GR}{given $ \eps > 0 $, there exists $ \delta > 0 $ such that for any $ x_1, \cdots, x_{\ksh} \in B(x_0,\delta) $, there exist $ P_1, \cdots, P_\ksh \in \P $, with $ P_j \in \Phi_{\ell}(x_j) $ for $ j = 0, \cdots, \ksh $, such that 
			\begin{equation*}
			\abs{\partial^\alpha \bra{P_i - P_j}(x_i)} \leq \eps \abs{x_i - x_j}^{m-\abs{\alpha}}
			\text{ for } \abs{\alpha} \leq m
			\text{ and }
			0 \leq i,j \leq \ksh\,.
			\end{equation*}}
		 We define the \underline{Glaeser refinement of $ \Phi_{\ell}(x_0) $} to be $ \Phi_{\ell+1}(x_0) $.
	\end{definition}

	\begin{remark}
		Without further assumption on $ \Phi_0(x) $, we do not know if $ \Phi_0(x) $ will stabilize after finite number of Glaeser refinement, i.e., if $ \Phi_{\ell^*+1}(x) = \Phi_{\ell^*}(x) $ for some $ \ell^* < \infty $. 
	\end{remark}
	
	We make a definition for a class of subsets of $ \P^m $ that will be known to stabilize.
	
	\begin{definition}\label{def.GF}
		Let $ E \subset \Rn $ be compact. For $ x \in E $, let $ \Phi(x) \subset \Rj^m_x $. We call $ \Phi(x) $ a \underline{Glaeser fiber} if $ \Phi(x) = \void $ or $ \Phi(x) $ has the form 
		\begin{equation*}
		\Phi(x) = P^x + I(x)
		\end{equation*}
		where $ P^x \in \Rj_x^m $ and $ I(x) \subset \Rj_x^m $ is an ideal. 
	\end{definition}

	The main theorem in \cite{F06} that provides an answer to Problem \ref{prob.W} is the following.
	
	\begin{theorem}[\hspace{1sp}\cite{F06}]\label{thm.Fef06}
		Let $ E \subset \Rn $ be compact. Suppose that for each $ x \in E $, we are given a nonempty Glaeser fiber $ \Phi_*(x) \subset \Rj_x^m $. Assume that $ \Phi_*(x) $ is its own Glaeser refinement. Then there exists $ F \in C^m(\Rn) $ with $ \jet_x^m F \in \Phi_*(x) $ for all $ x \in E $. 
	\end{theorem}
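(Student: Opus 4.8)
The plan is to follow the proof of the Main Theorem of \cite{F06}, whose structure I outline here. The construction is local-to-global: we will produce $F$ by gluing local $C^m$ solutions over a Whitney decomposition, and the content is that Glaeser stability forces these local solutions to be mutually compatible.

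I would begin by localizing. Since $E$ is compact, fix a cube containing $E$, decompose its complement (together with the region outside a large ball) into Whitney cubes $\set{Q_\nu}$ with $\diam Q_\nu \sim \dist(Q_\nu, E)$, and fix a partition of unity $\set{\theta_\nu}$ subordinate to slight dilates $Q_\nu^*$, with $\sum_\nu \theta_\nu \equiv 1$ off $E$ and $\abs{\partial^\alpha \theta_\nu} \lesssim (\diam Q_\nu)^{-\abs{\alpha}}$ for $\abs{\alpha} \le m$. To each $\nu$ I would attach a base point $y_\nu \in E$ with $\dist(y_\nu, Q_\nu) \lesssim \diam Q_\nu$ and a polynomial $P_\nu \in \P^m$, set $F := \sum_\nu \theta_\nu P_\nu$ off $E$, and define $F$ on $E$ by the forced limiting value (the constant term of $\Phi_*(x)$, which is determined unless $\Phi_*(x) = \Rj_x^m$). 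Everything then reduces to choosing the data $(y_\nu, P_\nu)$ so that $F \in C^m(\Rn)$ with $\jet_x^m F \in \Phi_*(x)$ for every $x \in E$.

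The local input comes from Glaeser stability combined with the finiteness principle. Fix $x_0 \in E$ and $P_0 \in \Phi_*(x_0)$. Since $\Phi_*$ is its own Glaeser refinement, the condition in Definition \ref{def.GR} applies with every $\Phi_\ell$ replaced by $\Phi_*$: for each $\eps > 0$ there is $\delta > 0$ so that any $x_1, \dots, x_\ksh \in B(x_0,\delta)$ carry jets $P_j \in \Phi_*(x_j)$ with $\abs{\partial^\alpha (P_i - P_j)(x_i)} \le \eps \abs{x_i - x_j}^{m - \abs{\alpha}}$ for all $\abs{\alpha}\le m$ and $0 \le i, j \le \ksh$. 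The integer $\ksh$ is chosen, as in \cite{F06}, precisely so that the finiteness principle holds with $\ksh$ points; applying it to the bundle $\Phi_*$ over $E \cap B(x_0,\delta)$ should then upgrade this ``$\ksh$-tuple-wise'' compatibility to a single $C^m$ function $F_{x_0}$ on a neighborhood of $x_0$ with $\jet_x^m F_{x_0} \in \Phi_*(x)$ for all $x \in E$ near $x_0$ and with $F_{x_0} - P_0$ of $C^m$-norm $O(\eps)$ there. Taking jets of such local functions supplies the candidate polynomials $P_\nu$, whose ``excess'' over the corresponding base jet tends to $0$ as $\diam Q_\nu \to 0$.

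The hard part — and what I expect to be the main obstacle — is arranging the local pieces \emph{coherently}, so that whenever $Q_\nu^*$ and $Q_{\nu'}^*$ overlap one has $\abs{\partial^\alpha (P_\nu - P_{\nu'})(y_\nu)} \le \eps_\nu (\diam Q_\nu)^{m - \abs{\alpha}}$ with $\eps_\nu \to 0$ as $\diam Q_\nu \to 0$, and moreover the jets of the $P_\nu$ converge into $\Phi_*(x)$ along any sequence of cubes shrinking to a point $x \in E$. Granted this, a by-now-standard estimate using the bounds on $\partial^\alpha \theta_\nu$ and $\sum_\nu \theta_\nu \equiv 1$ shows that every $\partial^\alpha F$, $\abs{\alpha} \le m$, extends continuously across $E$ with the correct values, so $F \in C^m(\Rn)$ and $\jet_x^m F \in \Phi_*(x)$ on $E$ — here the coset-of-ideal form of the Glaeser fibers (Definition \ref{def.GF}) is exactly what lets the patching errors be absorbed back into $\Phi_*$. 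Producing the coherent choice is the technical heart of \cite{F06}: one refines the notion of local solvability by labelling it with monotone families $\mathcal{A}$ of sets of multi-indices, well-orders these labels, and shows by induction on the labels — with the finiteness principle as the base case — that Glaeser stability transports mutually compatible local solutions from finer scales to coarser ones. The bookkeeping of labels, the transport of local data across scales, and the uniform smallness of the errors $\eps_\nu$ are the genuinely difficult points; the Whitney-decomposition gluing described above is routine by comparison.
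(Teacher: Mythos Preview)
The paper does not prove this theorem at all: it is quoted verbatim from \cite{F06} and used as a black box in the proof of Theorem~\ref{thm.main}. So there is no ``paper's own proof'' to compare your attempt against. Your outline is a faithful high-level sketch of Fefferman's original argument in \cite{F06} --- Whitney decomposition, partition of unity, local sections supplied by Glaeser stability plus the finiteness principle, and the label-indexed induction to make the local choices coherent --- and is appropriate as a pointer to that reference; for the purposes of the present paper, simply citing \cite{F06} (as the author does) is all that is required.
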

	
	We explain how we go from Theorem \ref{thm.Fef06} to answer Problem \ref{prob.W}. We begin with 
	\begin{equation}
	\Phi_0(x) := H_f(x): = \set{P \in \P^m_n : P(x) = f(x)}.
	\label{eq.phi0}
	\end{equation}
	Then $ \Phi_0(x) $ has the form $ f(x) + \m_0(x) $, where $ f(x) $ is the constant polynomial and 
	\begin{equation}
	\m_0(x) := \set{\phi^x \in \P^m_n: \text{ There exists } F \in C^m(\Rn)
		\text{ such that } F(x) = 0 \text{ and }
		\jet_x^m F = \phi^x
	}
	\label{m0}
	\end{equation}
	is clearly an ideal in $ \Rj_x^m $. Lemma 2.1 in \cite{F06} shows that $ \Phi_{\ell}(x) $ is still a Glaeser fiber under Glaeser refinement if we start with \eqref{eq.phi0}. Lemma 2.2 in \cite{F06} then shows that with this choice of $ \Phi_0 $, we have $ \Phi_\ell(x) = \Phi_{\ell^*}(x) $ for all $ \ell \geq \ell^* $, where $ \ell^* = 2\dim\P^m + 1 $. Therefore, deciding whether $ f : E \to \R $ extends to a $ C^m $ function amounts to computing the $ \ell^* $-th Glaeser refinement of $ H_f $. 
	\\

	Now we describe the key objects in this paper that are analogous to those above but also take into consideration of nonnegativity.

	\begin{definition}\label{def.G}
		Let $ E \subset \Rn $ be a compact subset. Let $ f : E \to [0,\infty) $. For $ x \in E $ and $ M > 0 $, we define
		\begin{equation*}
		\G_f^{(m)}(x) := \set{P \in \P^m : \text{ There exists } F \in C^m_+(\Rn) \text{ and } \jet_x F = P}\,.
		\end{equation*}
	\end{definition}

	\begin{remark}
		$ \G_f^{(m)}(x) $ is in general not a Glaeser fiber if $ m \geq 2 $. However, for $ m = 1 $, we will see in Lemma \ref{lem.GH} that it is. We will also see in Lemma \ref{lem.ideal} that it remains Glaeser after refinement.
	\end{remark}

	Our main result of the paper is the following. 
	
	\begin{theorem}\label{thm.main}
		Let $ m = 1 $. Let $ E \subset \Rn $ be compact, and let $ f : E \to [0,\infty) $ be given. For each $ x \in E $, let $ \Phi_0(x) := \G_f^{(1)}(x) $, and  for $ \ell \geq 0 $, let $ \Phi_{\ell+1}(x) $ be the Glaeser refinement of $ \Phi_{\ell}(x) $ defined by \eqref{GR}. 
		
		If $ \Phi_{2n+3}(x) \neq \void $ for each $ x \in E $, then there exists $ F \in C^1_+(\Rn) $ such that $ \jet_x^1 F \in \G_{f}^{(1)}(x) $ for each $ x \in E $. In particular, there exists $ F \in C^1_+(\Rn) $ such that $ F = f $ on $ E $.
	\end{theorem}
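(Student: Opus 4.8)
The plan is to run the nonnegative analogue of Fefferman's program in three stages: first establish that the starting fibers $\G_f^{(1)}(x)$ are Glaeser fibers (so Definition~\ref{def.GF} applies), next show that this class is preserved under Glaeser refinement and stabilizes after $2n+3$ steps, and finally reduce the nonnegative assembly to the linear one already provided by Theorem~\ref{thm.Fef06}. For the first stage I would argue that for $m=1$ a jet $P = (c, \xi) \in \Rj_x^1$ (value $c \ge 0$, gradient $\xi$) lies in $\G_f^{(1)}(x)$ iff $c = f(x)$ and, in the case $c = 0$, the gradient is forced to vanish (a $C^1_+$ function achieving an interior-type minimum value $0$ at $x$ must have zero derivative there), whereas if $c > 0$ any gradient is admissible. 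Thus $\G_f^{(1)}(x) = f(x) + I(x)$, where $I(x)$ is either all of $\mathfrak m_0(x)$ (when $f(x) > 0$) or the zero ideal (when $f(x) = 0$) — in both cases an ideal in $\Rj_x^1$, so $\G_f^{(1)}(x)$ is a Glaeser fiber. This is presumably the content of Lemma~\ref{lem.GH}, which I may cite.

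For the second stage I would show by induction on $\ell$ that each $\Phi_\ell(x)$ remains a Glaeser fiber of the special form $P^x_\ell + I_\ell(x)$ with $I_\ell(x)$ an ideal contained in $\mathfrak m_0(x)$; the key point is that Glaeser refinement of an affine-plus-ideal object, when that object sits inside the Whitney fiber $H_f$, is again of this form — this is the nonnegative analogue of Lemma~2.1 of \cite{F06}, and I expect it to follow by combining Fefferman's linear argument with the observation that the ``positivity constraint'' only ever shrinks the constant-term admissible set, which for $m=1$ is governed by whether $f(x)$ is zero or positive and is stable under taking limits along $E$. Stabilization after $2n+3 = 2\dim\P^1 + 1$ refinements then follows from the usual dimension-count argument (Lemma~2.2 of \cite{F06}): the family of ideals $I_\ell(x) \subset \Rj_x^1$ can strictly decrease in a semicontinuous fashion only boundedly often, and the factor $2$ accounts for the interleaving of the ``affine part'' and ``ideal part'' stabilizations. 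I would package this as Lemma~\ref{lem.ideal} (referenced in the remark after Definition~\ref{def.G}).

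Granting the above, set $\Phi_*(x) := \Phi_{2n+3}(x)$. By hypothesis $\Phi_*(x) \neq \void$ for every $x \in E$; by stabilization $\Phi_*(x)$ is its own Glaeser refinement; and by stage two it is a nonempty Glaeser fiber. Theorem~\ref{thm.Fef06} (with $m=1$) then produces $F \in C^1(\Rn)$ with $\jet_x^1 F \in \Phi_*(x) \subset \G_f^{(1)}(x)$ for all $x \in E$. It remains to upgrade this $F$ to be globally nonnegative while keeping its jets in $\G_f^{(1)}(x)$. The natural device is to replace $F$ by a suitable nonnegative modification: for each $x \in E$ with $\jet_x^1 F \in \G_f^{(1)}(x)$ there is, by definition, a genuine competitor $F^x \in C^1_+(\Rn)$ with $\jet_x^1 F^x = \jet_x^1 F$; one then glues $F$ (which already has the right jets on $E$ and is nonnegative to first order at each point of $E$) to these local nonnegative competitors away from $E$ using a Whitney partition of unity subordinate to $\Rn \setminus E$, or alternatively truncates via $\max(F, 0)$-type smoothing in regions where $F$ is safely positive and patches near $E$ using that $F(x) = f(x) \ge 0$ with the correct first-order behavior. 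I expect this final gluing to be the main obstacle: one must verify that the patched function is still $C^1$, still nonnegative everywhere, and still has $\jet_x^1 \in \G_f^{(1)}(x)$ on $E$ — the delicate case being points $x$ with $f(x) = 0$, where the jet is forced to be the zero jet and any error term of size $o(|y-x|)$ that dips negative must be absorbed, which is exactly where having $I(x) = \{0\}$ and the Glaeser compatibility across nearby points of $E$ is used. Once this is done, the last sentence (existence of $F \in C^1_+(\Rn)$ with $F = f$ on $E$) is immediate since $\jet_x^1 F \in \G_f^{(1)}(x)$ forces $F(x) = f(x)$.
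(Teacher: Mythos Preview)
Your proposal is correct and follows essentially the same route as the paper: verify that $\G_f^{(1)}(x)$ is a Glaeser fiber (Lemma~\ref{lem.GH}), preserve this under refinement and stabilize after $2n+3$ steps (Lemmas~\ref{lem.ideal} and \ref{lem.stab}), apply Theorem~\ref{thm.Fef06} to obtain $F_0 \in C^1(\Rn)$ with $\jet_x F_0 \in \G_f^{(1)}(x)$, and then upgrade to a nonnegative extension via the Whitney-type gluing you describe, which the paper isolates as Theorem~\ref{thm.nw}. One small sharpening worth noting: in that final step the paper discards $F_0$ entirely (retaining only the jets $P^x = \jet_x F_0$) and glues \emph{only} the nonnegative competitors $F^{r_Q}$ over a Whitney decomposition of $\Rn\setminus E$, so global nonnegativity is automatic and no $\max(F,0)$-type smoothing is needed.
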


	To prove Theorem \ref{thm.main}, we will show that under its hypotheses, the hypotheses of Theorem \ref{thm.Fef06} (with $ m = 1 $) are satisfied. Theorem \ref{thm.Fef06} then produces a $ C^1 $ function, which is not necessarily nonnegative, and whose jet at each $ x \in E $ belongs to $ \G_{f}^{(1)}(x) $. We will then use these jets to reconstruct a nonnegative counterpart that takes the same jet at each $ x \in E $, hence solving Problem \ref{prob.W+}. The reconstruction uses a variant of the classical Whitney Extension Theorem.

	\section{Main ingredients}
	
	In this section, we prove the main ingredients.

	\subsection{Preservation of Glaeser fiber}

	The main result we prove in this subsection is the following lemma, which states Glaeser fiber remains Glaeser after refinement.

	\begin{lemma}\label{lem.glaeser form}
		Suppose $ \Phi_\ell(x) $ is a Glaeser fiber for each $ x \in E $, then $ \Phi_{\ell+1}(x) $ is a Glaeser fiber for each $ x \in E $. 
	\end{lemma}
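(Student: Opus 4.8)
The argument follows the template of Lemma~2.1 of \cite{F06}. Fix $x_0 \in E$. If $\Phi_{\ell+1}(x_0) = \void$ there is nothing to check, so assume it is nonempty and fix a jet $Q \in \Phi_{\ell+1}(x_0)$. Since \eqref{GR} immediately forces $P_0 \in \Phi_\ell(x_0)$ whenever $P_0 \in \Phi_{\ell+1}(x_0)$, we have $\Phi_{\ell+1}(x_0) \subseteq \Phi_\ell(x_0)$; as $\Phi_\ell(x_0)$ is a nonempty Glaeser fiber we may write $\Phi_\ell(x_0) = Q + I(x_0)$ with $I(x_0) \subseteq \Rj_{x_0}^m$ an ideal. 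Applying \eqref{GR} to $Q$ also produces a radius $\delta_0 > 0$ such that $\Phi_\ell(x) \neq \void$ --- hence $\Phi_\ell(x) = P^x + I(x)$ for an ideal $I(x)$ --- for every $x \in E \cap B(x_0,\delta_0)$; we will only ever feed the refinement test points $x_j$ inside such a ball. It then suffices to prove that $J(x_0) := \Phi_{\ell+1}(x_0) - Q$ is an ideal of $\Rj_{x_0}^m$.

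The first thing to establish is that $J(x_0)$ is a linear subspace, equivalently that $\Phi_{\ell+1}(x_0)$ is closed under affine combinations. Given $U_1,\dots,U_N \in \Phi_{\ell+1}(x_0)$ with $\sum_k \lambda_k = 1$ and a target tolerance $\eps$, I would apply \eqref{GR} to each $U_k$ with a tolerance $\eps_0>0$ chosen so that $\eps_0\sum_k\abs{\lambda_k}\le\eps$, obtaining, for any $x_1,\dots,x_\ksh$ near $x_0$, approximating families $(P^{(k)}_j)_j$ with $P^{(k)}_j \in \Phi_\ell(x_j)$ and $P^{(k)}_0 = U_k$, and then set $P_j := \sum_k \lambda_k P^{(k)}_j$. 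Because each $\Phi_\ell(x_j)$ is a nonempty coset of a subspace and $\sum_k\lambda_k=1$, each $P_j$ lies in $\Phi_\ell(x_j)$; linearity of $\partial^\alpha$ turns the $N$ Whitney estimates into one with tolerance $\eps$; and $P_0 = \sum_k\lambda_k U_k$. This step uses nothing beyond the affine-subspace structure of the $\Phi_\ell(x_j)$'s and is routine.

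The substantive step is closure of $J(x_0)$ under the jet product: given $R \in J(x_0)$ and $S \in \Rj_{x_0}^m$, I must show $Q + R \odot_{x_0}^m S \in \Phi_{\ell+1}(x_0)$. The plan is to run \eqref{GR} simultaneously for $Q$ and for $Q+R$, both in $\Phi_{\ell+1}(x_0)$, with a small tolerance $\eps_1$, getting families $(A_j)$ and $(B_j)$ with $A_j,B_j \in \Phi_\ell(x_j)$, $A_0 = Q$, $B_0 = Q+R$. Setting $C_j := B_j - A_j$, the difference of two points of the coset $\Phi_\ell(x_j) = P^{x_j} + I(x_j)$, we get $C_j \in I(x_j)$ and $C_0 = R$; since $I(x_j)$ is an ideal, $\tilde P_j := A_j + C_j \odot_{x_j}^m S$ again lies in $\Phi_\ell(x_j)$, while $\tilde P_0 = Q + R \odot_{x_0}^m S$ exactly. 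It then remains to verify that $(\tilde P_j)$ satisfies the Whitney compatibility with tolerance $\eps$. Writing $\tilde P_i - \tilde P_j = (A_i - A_j) + \bra{C_i \odot_{x_i}^m S - C_j \odot_{x_j}^m S}$, the first term is controlled by $\eps_1$ directly; for the second I would add and subtract $\jet_{x_i}^m(C_j S)$, bound $\jet_{x_i}^m\bra{(C_i - C_j)S}$ by the Leibniz rule together with the $\eps_1$-estimates for $(A_j)$ and $(B_j)$, and bound $\jet_{x_i}^m(C_j S) - \jet_{x_j}^m(C_j S)$ by a change-of-expansion-point Taylor remainder estimate, which gains a factor $\abs{x_i - x_j}^{m-\abs\alpha+1}$, i.e.\ an extra factor $O(\delta)$ relative to the target $\abs{x_i-x_j}^{m-\abs\alpha}$. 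Choosing $\eps_1$ small and then $\delta$ small makes the total tolerance $\leq \eps$, so $R \odot_{x_0}^m S \in J(x_0)$. Together with the previous step this gives $\Phi_{\ell+1}(x_0) = Q + J(x_0)$ with $J(x_0)$ an ideal, hence a Glaeser fiber, for every $x_0 \in E$.

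The main obstacle is precisely this last estimate: the jet product $\odot_{x_j}^m$ is anchored at the moving base point $x_j$, so one must show $C_j \odot_{x_j}^m S$ still varies ``Whitney-ly'' in $j$ even though its anchor moves. This is handled by the Leibniz rule plus the Taylor-remainder bound for changing the expansion point, using that $C_j$ stays bounded on $B(x_0,\delta)$ (it is $O(1)$-close to the fixed polynomial $R$ there, by the $\eps_1$-estimate against $C_0 = R$) and that $S$ is a fixed polynomial. Apart from this bookkeeping the proof is the same as for Lemma~2.1 of \cite{F06}; no new idea is needed.
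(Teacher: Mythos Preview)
Your argument is correct and follows the same architecture as the paper's proof: reduce to showing that $\Phi_{\ell+1}(x_0)-Q$ is an ideal, obtain elements of $I_\ell(x_j)$ at nearby points by subtracting two applications of \eqref{GR}, and then check that multiplying those by a fixed jet preserves the Whitney compatibility. The only difference is in that last estimate: the paper invokes the classical Whitney Extension Theorem on the two-point set $\{x_i,x_j\}$ to produce a function $F^{ij}$ with $\jet_{x_\nu}^mF^{ij}=\phi_\nu$ and then applies Taylor's theorem to $F^{ij}\cdot\tau$, whereas you work directly at the level of polynomials via the Leibniz rule and a change-of-basepoint remainder bound, exploiting that $C_j$ stays uniformly close to the fixed polynomial $R$. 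Both routes give the required $\eps$-estimate; yours is slightly more elementary in that it avoids the appeal to the extension theorem, while the paper's packaging of the ``homogeneous'' characterization (its Claim~3.1) makes the linear-subspace step implicit rather than a separate verification.
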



	\begin{proof}
		We expand the argument given by Lemma 2.1 in \cite{F06}.
		
		Fix $ x_0 \in E $. If $ \Phi_{\ell+1}(x_0) = \void $, there is nothing to prove. 
		
		Suppose $ \Phi_{\ell+1}(x_0) \neq \void $. Pick arbitrary $ P^{x_0}_{\ell+1} \in \Phi_{\ell+1}(x_0) $. Let
		\begin{equation}
		I_{\ell+1}(x_0) := \Phi_{\ell+1}(x_0) - P^{x_0}_{\ell+1}. 
		\label{eq.Iel+1}
		\end{equation}
		To show that $ \Phi_{\ell+1}(x_0) $ is a Glaeser fiber, it suffices to show that $ I_{\ell+1}(x_0) $ is an ideal in $ \Rj_{x_0}^m $. 
		
		By assumption, for each $ x \in E $,
		\begin{equation}
		\Phi_{\ell}(x) = P_\ell^x + I_\ell(x)\,,
		\end{equation}
		where $ P_\ell^x \in \Rj^m_x $, and $ I_\ell(x) \subset \Rj^m_x $ is an ideal.
		
		\begin{claim}\label{claim.homoge}
			$ I_{\ell+1}(x_0) $ are defined by the following procedure:
			\LAQ{form1}{$ \phi_0 \in I_{\ell+1}(x_0) $
				if and only if the following holds: given $ \eps > 0 $, there exists $ \delta > 0 $ such that for any $ x_1, \cdots, x_\ksh \in E \cap B(x_0,\delta) $, there exist $ \phi_1, \cdots, \phi_\ksh $, with $ \phi_j \in I_\ell(x_j) $ for $ j = 0, \cdots, \ksh $, such that
			}
		\begin{equation}
		\abs{\partial^\alpha \bra{\phi_i - \phi_j}(x_i)} \leq \eps \abs{x_i - x_j}^{m-\abs{\alpha}}
		\text{ for }
		\abs{\alpha} \leq m, \,0 \leq i,j \leq \ksh\,.
		\label{eq.glaeser compatible}
		\end{equation}
		\end{claim}

		\begin{proof}[Proof of Claim \ref{claim.homoge}]
			
			First we show sufficiency. Suppose $ \phi_0 \in I_{\ell+1}(x_0) $. Fix $ \eps_0 > 0 $. Write $ P_0 = P^{x_0}_{\ell+1} $. Define
			\begin{equation*}
			\hat{P}_0 := P_0 + \phi_0\,.
			\end{equation*}
			By \eqref{eq.Iel+1}, 
			\begin{equation*}
			\hat{P}_0 \in \Phi_{\ell+1}(x_0)\,.
			\end{equation*}
			Applying Definition \ref{def.GR} to $ P_0 $ and $ \hat{P}_0 $ with $ \frac{1}{2}\eps_0 $ in place of $ \eps $, we find a $ \delta > 0 $ such that for any $ x_1, \cdots, x_{\ksh} \in E \cap B(x_0,\delta) $, there exist $ P_j, \hat{P}_j \in \Phi_{\ell}(x_i) $ for $ j = 1, \cdots, \ksh $, such that 
			\begin{equation}
				\begin{split}
				\abs{\partial^\alpha (P_i - P_j)(x_i)} &\leq \frac{1}{2}\eps_0\abs{x_i - x_j}^{m-\abs{\alpha}}\,,\\
				\abs{\partial^\alpha (\hat{P}_i - \hat{P}_j)(x_i)} &\leq \frac{1}{2}\eps_0\abs{x_i - x_j}^{m-\abs{\alpha}}\,.
				\end{split}
				\label{eq.compat}
			\end{equation}
			Now, let
			\begin{equation*}
			\phi_j:= \hat{P}_j - P_j
			\text{ for } j = 1, \cdots, \ksh\,.
			\end{equation*}
			Thanks to \eqref{eq.compat}, the $ \phi_j $'s satisfy \eqref{eq.glaeser compatible}. 
			
			Now we need to check that $ \phi_j \in I_\ell(x_j) $ for $ j = 1, \cdots, \ksh $. Indeed, by the induction hypothesis, the $ \Phi_{\ell}(x_i) $'s are Glaeser fiber, so 
			\begin{equation*}
			\phi_j \in I_\ell(x_j)
			\text{ for } j = 1, \cdots, \ksh\,.
			\end{equation*}
			This proves sufficiency.
			
			Now we show necessity. Let $ \eps_0 > 0 $ be given. Apply Definition \ref{def.GR} to $ P_0 $ and apply the latter condition in \eqref{form1} to $ \phi_0 $, with $ \frac{1}{2}\eps_0 $ in place of $ \eps $, we see that there exists $ \delta > 0 $ such that for any $ x_1, \cdots, x_\ksh \in E \cap B(x_0,\delta) $, there exist $ P_j \in \Phi_{\ell}(x_j) $ and $ \phi_j \in I_\ell(x_j) $, $ j = 1, \cdots, \ksh $, satisfying
			\begin{equation}
			\begin{split}
			\abs{\partial^\alpha (P_i - P_j)(x_i)} &\leq \frac{1}{2}\eps_0\abs{x_i - x_j}^{m-\abs{\alpha}}\,,\\
			\abs{\partial^\alpha (\phi_i - \phi_j)(x_i)} &\leq \frac{1}{2}\eps_0\abs{x_i - x_j}^{m-\abs{\alpha}}\,.
			\end{split}
			\label{eq.compat2}
			\end{equation}
			Now, let
			\begin{equation}
			\hat{P}_j := P_j + \phi_j
			\text{ for } j = 1, \cdots, \ksh\,.
			\label{eq.defPhat}
			\end{equation}
			By induction hypothesis, the $ \Phi_{\ell}(x_j) $'s are Glaeser, so
			\begin{equation*}
			\hat{P}_j \in \Phi_{\ell}(x_j)
			\text{ for all } j = 1, \cdots, \ksh\,.
			\end{equation*}
			Thanks to \eqref{eq.compat2}, we have
			\begin{equation*}
			\abs{\partial^\alpha (\hat{P}_i - \hat{P}_j)(x_i)} \leq \eps_0\abs{x_i - x_j}^{m-\abs{\alpha}}
			\text{ for } 0 \leq i,j \leq \ksh\,.
			\end{equation*}
			Therefore,
			\begin{equation}
			\hat{P}_0 \in \Phi_{\ell+1}(x_0)\,.
			\label{eq.ah}
			\end{equation}
			Now, \eqref{eq.Iel+1}, \eqref{eq.defPhat}, and \eqref{eq.ah} together imply $ \phi_0 \in I_{\ell+1}(x_0) $. This proves necessity, and concludes the proof of the claim.
		\end{proof}

		To finish the proof of the lemma, we fix $ \phi_0 \in I_{\ell+1}(x_0) $ and $ \tau \in \Rj_{x_0}^m $. It suffices to show that
		\begin{equation}
		\tilde{\phi}_0 := \phi_0 \odot_{x_0}^m \tau \in I_{\ell+1}(x_0)\,.
		\label{ideal conc}
		\end{equation}
		Let $ \eps_0 > 0 $. Let $ \delta, x_1, \cdots, x_{\ksh}, \phi_1, \cdots, \phi_{\ksh} $ be as in \eqref{form1} with $ A^{-1}\eps_0 $ in place of $ \eps $, for some $ A > 0 $ to be determined. Define
		\begin{equation*}
		\tilde{\phi}_j := \phi_j \odot_{x_j}^m \tau 
		\text{ for } j = 1, \cdots, \ksh\,.
		\end{equation*}
		Since $ I_\ell(x_j) \subset \Rj_{x_j}^m $ is an ideal by assumption, we have $ \tilde{\phi}_j \in I_\ell(x_j) $ for all $ j = 1, \cdots, \ksh $. 
		Moreover, by the classical Whitney Extension Theorem for finite set (see e.g. \cite{St79}) and \eqref{eq.glaeser compatible}, for each distinct pair $ x_i, x_j $, we may find $ F^{ij} \in C^{m}(\Rn) $ such that
		\begin{equation}
		\abs{\partial^\alpha F} \leq M
		\text{ for }
		\abs{\alpha} \leq m
		\text{ on }
		\Rn
		\label{eq.TC1}
		\end{equation}
		where $ M $ is a number depending only on $ m,n $, and $ \eps_0 $, and that
		\begin{equation}
			\jet_{x_\nu}^m F^{ij} = \phi_\nu
			\text{ for }\nu = i,j\,.
			\label{eq.TC2}
		\end{equation}
		Therefore, $ \tilde{\phi}_\nu = \jet_{x_\nu}^m(F^{ij}\cdot\tau) $ for $ \nu = i,j $. Taylor's theorem, combined with \eqref{eq.TC1} and \eqref{eq.TC2}, implies
		\begin{equation*}
		\abs{\partial^\alpha \bra{\tilde{\phi}_i - \tilde{\phi}_j}(x_i)}
		= \abs{\partial^\alpha \bra{F^{ij}\cdot\tau - \jet_{x_j}^m(F^{ij}\cdot\tau)}(x_j)} \leq B_\tau\cdot A^{-1}\eps_0\abs{x_i-x_j}^{m-\abs{\alpha}}
		\text{ for }
		\abs{\alpha}\leq m\,.
		\end{equation*}
		Here, we may take $ B_\tau $ to be a number that depends only on $ M $ and $ \tau $. Taking $ A > B_\tau $, we can conclude that
		\begin{equation*}
		\abs{\partial^\alpha \bra{\tilde{\phi}_i - \tilde{\phi}_j}(x_i)} \leq \eps_0\abs{x_i - x_j}^{m-\abs{\alpha}}
		\text{ for all }
		\abs{\alpha} \leq m, \, 0 \leq i,j \leq \ksh\,.
		\end{equation*}
		Hence, we have shown \eqref{ideal conc}. The lemma is proved. 
	\end{proof}

	\begin{lemma}[]\label{lem.stabPhi}
		Suppose $ \Phi_\ell(x) \subset \Rj_x^m $ is a Glaeser fiber for each $ x \in E $ and $ \ell \geq 0 $. If $ \ell^* = 2\dim \P^m + 1 $, then for each $ x \in E $,  $ \Phi_\ell(x) = \Phi_{\ell^*}(x) $ for all $ \ell \geq \ell^* $. 
	\end{lemma}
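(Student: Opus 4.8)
The plan is to adapt the stabilization argument of Lemma~2.2 in \cite{F06}, which uses nothing about the maps $\Phi_\ell$ beyond the standing hypothesis that each $\Phi_\ell(x)$ is a Glaeser fiber. I would phrase everything in terms of the dimension function $x \mapsto \dim \Phi_\ell(x)$, using the convention $\dim \void = -1$. Two elementary observations come first. \emph{Monotonicity:} choosing $x_1 = \cdots = x_{\ksh} = x_0$ in \eqref{GR} shows $\Phi_{\ell+1}(x_0) \subseteq \Phi_\ell(x_0)$; writing both fibers with a common base point $P \in \Phi_{\ell+1}(x_0)$ gives $I_{\ell+1}(x_0) \subseteq I_\ell(x_0)$, so $\dim \Phi_{\ell+1}(x_0) \leq \dim \Phi_\ell(x_0)$, with equality if and only if $\Phi_{\ell+1}(x_0) = \Phi_\ell(x_0)$. \emph{Locality:} $\Phi_{\ell+1}(x_0)$ depends only on the restriction of $\Phi_\ell$ to an arbitrarily small neighborhood of $x_0$, so if $\Phi_{\ell+1} = \Phi_\ell$ as set-valued maps on $E$, then $\Phi_{\ell'} = \Phi_\ell$ for every $\ell' \geq \ell$. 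Together these reduce the lemma to bounding the first index at which the non-increasing, integer-valued sequence $\ell \mapsto \dim \Phi_\ell(x)$ becomes constant for every $x \in E$ simultaneously.

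Next I would establish the semicontinuity estimate
\[
\dim \Phi_{\ell+1}(x_0) \;\leq\; \liminf_{E \ni y \to x_0} \dim \Phi_\ell(y) \qquad (x_0 \in E,\ \ell \geq 0),
\]
where the right-hand side is read as $\dim \Phi_\ell(x_0)$ when $x_0$ is isolated in $E$ (and then $\Phi_{\ell+1}(x_0) = \Phi_\ell(x_0)$ trivially). For this, fix a basis $\phi_0^{(1)}, \dots, \phi_0^{(d)}$ of $I_{\ell+1}(x_0)$, $d = \dim \Phi_{\ell+1}(x_0)$, and apply the characterization of $I_{\ell+1}(x_0)$ from Claim~\ref{claim.homoge} to each $\phi_0^{(k)}$, taking one of the test points to be a nearby $y \in E$: this produces, for any prescribed tolerance, elements $\phi_y^{(k)} \in I_\ell(y)$ whose Taylor coefficients at $x_0$ are arbitrarily close to those of $\phi_0^{(k)}$ once $y$ is close to $x_0$. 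Since linear independence is an open condition, $\phi_y^{(1)}, \dots, \phi_y^{(d)}$ stay independent for $y$ near $x_0$, whence $\dim \Phi_\ell(y) \geq d$. One also needs, as in \cite{F06}, the companion fact that $x \mapsto \dim \Phi_\ell(x)$ is upper semicontinuous for $\ell \geq 1$, so that after the first refinement the dimension functions are squeezed, from above and below, toward local constancy; this is where the Glaeser-fiber structure is used most heavily, through a limiting argument on ideals in $\Rj_x^m$.

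Finally I would run the counting of \cite{F06}: any refinement that genuinely alters some fiber strictly lowers its dimension there; the dimension at a fixed point can drop at most $\dim \P^m + 1$ times; and the locality and semicontinuity properties prevent the set of points where fibers are still changing from wandering indefinitely. Executing this bookkeeping shows that all fibers are frozen after $\ell^* = 2\dim \P^m + 1$ refinements. The hard part will be exactly this last step — converting the pointwise dimension budget $\dim \P^m + 1$ into the global bound $2\dim \P^m + 1$, the extra factor reflecting the alternation between the dimension function and its lower-semicontinuous envelope along the refinement. Since the combinatorial argument in \cite{F06} never uses the particular origin of $\Phi_0$ and appeals only to the Glaeser-fiber structure that our hypothesis supplies, it transfers to the present setting without change.
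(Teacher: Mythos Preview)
Your proposal is correct and follows exactly the route the paper takes: the paper does not give an independent proof but simply states that the argument is that of Lemma~2.2 in \cite{F06} (with pointers to \cite{BMP03,G58,KZ07}), and your sketch is precisely an outline of that argument. In fact you supply more detail than the paper does; the only caveat is that the semicontinuity bookkeeping in \cite{F06} is organized slightly differently from your sketch, but since you explicitly defer to \cite{F06} for the final counting step this is immaterial.
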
 
	
	The argument is the same that of Lemma 2.2 in \cite{F06}, which is inspired by \cite{BMP03} and \cite{G58}. We direct the interested readers to those cited above as well as \cite{KZ07} for a discussion on stabilization of Glaeser refinement.

	\subsection{Nonnegative Whitney Extension Theorem}
	
	In this subsection, we sketch the proof of the nonnegative version of the classical Whitney Extension Theorem\cite{W34-1}.

	\begin{theorem}\label{thm.nw}
		Let $ E \subset \Rn $ be compact. Let $ f: E \to [0,\infty) $. Let $ \set{P^x : x \in E} $ be a collection of polynomials such that $ P^x \in \G_f^{(m)}(x) $ for all $ x \in E $.
		Suppose
		\begin{equation}
		\abs{\partial^\alpha \bra{P^x - P^y}(x)} = o\bra{\abs{x-y}^{m-\abs{\alpha}}}
		\text{ as } \abs{x-y} \to 0,\, \text{ for all }x, y \in E
		\text{ and } \abs{\alpha} \leq m\,.
		\label{eq.wc}
		\end{equation}
		Then there exists $ F \in C^m_+(\Rn) $ with $ \jet_x^mF = P^x $ for all $ x \in E $. 
	\end{theorem}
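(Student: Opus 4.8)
The plan is to imitate H.~Whitney's original extension construction, but to paste together genuine nonnegative functions in place of Taylor polynomials. First I would fix, for each $x \in E$, a function $F_x \in C^m_+(\Rn)$ with $\jet_x^m F_x = P^x$; such an $F_x$ exists precisely because $P^x \in \G_f^{(m)}(x)$. Next, take a Whitney decomposition of $\Rn \setminus E$ into dyadic cubes $\set{Q_\nu}$ with $\diam Q_\nu \approx \dist(Q_\nu, E)$, together with the usual subordinate partition of unity $\set{\theta_\nu}$: $\sum_\nu \theta_\nu \equiv 1$ on $\Rn \setminus E$, $\supp \theta_\nu \subset Q_\nu^*$ (a fixed dilate of $Q_\nu$), and $\abs{\partial^\beta \theta_\nu} \le C\,\diam(Q_\nu)^{-\abs{\beta}}$. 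For each $\nu$ pick a point $x_\nu \in E$ with $\abs{x_\nu - y} \le C\,\dist(y,E)$ for every $y \in Q_\nu^*$, and set
\[
F(y) := \begin{cases} f(y), & y \in E,\\ \displaystyle\sum_\nu \theta_\nu(y)\,F_{x_\nu}(y), & y \in \Rn \setminus E. \end{cases}
\]

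Nonnegativity of $F$ is then immediate: off $E$ it is a convex combination of the nonnegative functions $F_{x_\nu}$, and on $E$ it equals $f \ge 0$. The real content is that $F \in C^m(\Rn)$ with $\jet_x^m F = P^x$ for every $x \in E$ (so in particular $F = f$ on $E$). On $\Rn \setminus E$ this is clear since $F$ is locally a finite sum of $C^m$ functions. Near a point $x_0 \in E$ I would run the classical Whitney estimate: for $y \in \Rn \setminus E$ with $y \to x_0$ and $\abs{\alpha} \le m$, choose a reference cube with selected point $x_{\nu_0}$ at distance $\approx \dist(y,E)$ from $y$, expand $\partial^\alpha\bra{F - P^{x_{\nu_0}}}(y)$ by the Leibniz rule, and use $\sum_\nu \partial^\beta\theta_\nu(y) = 0$ for $\beta \ne 0$. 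Since the cubes contributing at $y$ have selected points mutually within $O(\dist(y,E))$, this reduces the whole estimate to two types of terms: (i) differences $\partial^\gamma\bra{P^{x_\nu} - P^{x_{\nu_0}}}(y)$, controlled verbatim by the Whitney compatibility \eqref{eq.wc} as in the classical theorem; and (ii) Taylor remainders $\partial^\gamma\bra{F_{x_\nu} - \jet_{x_\nu}^m F_{x_\nu}}(y)$ at points $y$ with $\abs{y - x_\nu} \le C\,\diam Q_\nu$. Combining with $\abs{\partial^\alpha\bra{P^{x_{\nu_0}} - P^{x_0}}(y)} = o\bra{\abs{y-x_0}^{m-\abs{\alpha}}}$, again from \eqref{eq.wc}, would complete the verification of $C^m$-smoothness and of the prescribed jets.

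The step I expect to be the main obstacle is term (ii): this is where the nonnegative version genuinely departs from the classical one. Writing $\omega_x$ for the modulus of continuity of the derivatives $\partial^\gamma F_x$, $\abs{\gamma}\le m$, each remainder in (ii) is $O\bra{\omega_{x_\nu}\bra{\dist(y,E)}\,\dist(y,E)^{m-\abs{\gamma}}}$, and multiplying by $\abs{\partial^\beta\theta_\nu} \lesssim \dist(y,E)^{-\abs{\beta}}$ gives $O\bra{\omega_{x_\nu}\bra{\dist(y,E)}\,\dist(y,E)^{m-\abs{\alpha}}}$; for this to be $o\bra{\abs{y-x_0}^{m-\abs{\alpha}}}$ uniformly in $x_0 \in E$ I need the realizers chosen so that $\set{\partial^\gamma F_x : x \in E}$ is equicontinuous, i.e.\ a single modulus $\omega$ with $\omega_x \le \omega$ for all $x$. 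This does not follow from the mere pointwise existence of realizers, and it is the one place where nonnegativity forces new work. I would obtain it from compactness of $E$: for $x \in E$ and $t > 0$ let $N(x,t)$ be the infimum, over all $G \in C^m_+(\Rn)$ with $\jet_x^m G = P^x$, of $\sup\set{\abs{\partial^\gamma(G - P^x)(y)}\,\abs{y-x}^{\abs{\gamma}-m} : \abs{y-x} \le t,\; \abs{\gamma}\le m}$; then $N(x,\cdot)$ is non-increasing with $N(x,t)\to 0$ as $t \to 0$ for each fixed $x$, and, using the continuity of $x \mapsto P^x$ furnished by \eqref{eq.wc} (and, when $m=1$, the explicit description of $\G_f^{(1)}(x)$ in the remark following Definition~\ref{def.G}), one checks that $x \mapsto N(x,t)$ is upper semicontinuous. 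A Dini-type argument on the open sets $\set{x : N(x,t) < \eps}$, which increase to $E$ as $t \downarrow 0$, then gives $\sup_{x \in E}N(x,t) \to 0$. Re-selecting each $F_x$ to realize $N(x,t)$ within a factor $2$ along a sequence $t \to 0$ produces the common modulus $\omega$, after which the estimates above close and the theorem follows. The crux is thus exactly this equicontinuous selection of nonnegative local realizers; the remaining bookkeeping is that of the classical Whitney extension theorem.
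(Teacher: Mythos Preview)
Your construction is exactly the paper's: a Whitney cover of $\Rn\setminus E$, a nonnegative partition of unity, and the patched function $F=\sum \theta_\nu F_{x_\nu}$ built from genuine nonnegative realizers $F_{x_\nu}\in C^m_+(\Rn)$ rather than from the bare polynomials $P^{x_\nu}$. The paper presents only a \emph{Sketch of Proof} and, at the passage from Taylor's theorem to ``$\partial^\alpha F_Q(\hat x)\to\partial^\alpha P^{\hat x}(\hat x)$ uniformly'', simply asserts the conclusion without isolating the uniformity issue you call item (ii).

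You are right that (ii) is the only place where the argument differs from the classical Whitney theorem, and that it comes down to choosing the realizers $F_x$ with a \emph{common} modulus of continuity for their top derivatives. Your Dini strategy is natural, but the step you flag as routine---upper semicontinuity of $x\mapsto N(x,t)$---is itself the crux and is not justified by the hints you give. The difficulty is this: if $G_0$ is a near-optimal realizer at $x_0$, one cannot in general perturb $G_0$ by the affine correction $P^{x}-\jet_{x}^1G_0$ to obtain a realizer at a nearby $x$, because that correction can destroy nonnegativity; and near points where $f$ is small the admissible realizers are forced (by $G\ge 0$ and $\jet_xG=P^x$) to have modulus at least of order $|\nabla P^x|$ at scale $f(x)/|\nabla P^x|$, so one must use the Whitney compatibility \eqref{eq.wc} quantitatively to show this lower bound is harmless. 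In short, for $m=1$ the explicit description of $\G_f^{(1)}(x)$ does make an equicontinuous selection possible, but it takes an argument---e.g.\ an explicit family such as $F_x(y)=\chi\bigl(P^x(y)/\eta(x)\bigr)\,P^x(y)$ with a carefully chosen scale $\eta(x)\gtrsim f(x)$ tied to the Whitney data, or a direct estimate showing $\sup_{x\in E}N(x,t)\to0$---rather than an appeal to abstract semicontinuity. Once that selection is made, the rest of your write-up (Leibniz expansion, splitting into terms (i) and (ii), and the classical bookkeeping) is correct and matches the paper.
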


	\newcommand{\W}{\mathcal{W}}
	\renewcommand{\int}{\mathrm{interior}}
	
	\begin{proof}[Sketch of Proof]
		Let $ \W_E $ be a Whitney cover of $ \Rn\setminus E $, namely, $ \W_E = \set{Q}_{Q \in \W_E} $ such that the following hold.
		\begin{itemize}
			\item Each $ Q \in \W_E $ is a closed cube in $ \Rn $.
			\item If $ Q, Q' \in \W_E $ and $ Q \neq Q' $, then $ \int(Q) \cap \int(Q') = \void $.
			\item for every $ Q \in \W_E $,
			\begin{equation*}
			\frac{1}{4} \diam (Q) \leq \dist(Q,E) \leq 4 \diam(Q)\,.
			\end{equation*}
			
		\end{itemize}
		Let $ \set{\varphi_Q: Q \in \W_E} $ be a $ C^m $ partition of unity satisfying
		\begin{itemize}
			\item $\sum_{Q \in \W_E}\varphi_Q(x) = 1$ for all $ x \in \Rn\setminus E $.
			\item $ \supp(\varphi_Q) \subset \frac{3}{2} Q $. 
			\item $ \abs{\partial^\alpha\varphi_Q} \leq C(\diam Q)^{-\abs{\alpha}} $ for all $ \abs{\alpha} \leq m $.
		\end{itemize}
		For the existence of such covering and partition of unity, see e.g. \cite{St79,W34-1}.

		For each $ x \in E $, since $ P^x \in \G_f(x) $, there exists $ F^x \in C^m_+(\Rn) $ such that $ \jet_x^mF^x = P^x $. 
		
		For each $ Q \in \W_E $, we pick a representative point $ r_Q \in E $ (not necessarily unique) such that
		\begin{equation*}
		\dist(r_Q, Q) = \dist(E,Q). 
		\end{equation*}
		
		We also let 
		\begin{equation*}
		F_Q := F^{r_Q}\,.
		\end{equation*}

		Define
		\begin{equation*}
		F(x):= \left\{\begin{matrix}
		\sum\limits_{Q \in \W_E}\varphi_Q(x)F_Q(x)
		& x \in   \Rn\setminus E\\
		\,\,\,\,\,\,\,\,\,\,\,f(x)
		&x \in   E
		\end{matrix}\right.\,\,.
		\end{equation*}

		We want to show that $ F \in C^m(\Rn) $ with $ F \geq 0 $ on $ \Rn $, and $ \jet^m_x F = P^x $ for all $ x \in E $. 
		
		It is clear that $ F \geq 0 $ on $ \Rn $, since all of the $ \varphi_Q $'s and the $ F_Q $'s are.
		
		It is also clear that $ F $ is $ C^m $ away from $ E $ since each of the $ F_Q $'s are and the supports of the $ \varphi_Q $'s have bounded overlap. Therefore, it suffices to examine the differentiability property of $ F $ near the set $ E $ and the jet of $ F $ on $ E $. 
		
		By Taylor's theorem, 
		\begin{equation*}
		\partial^\alpha F_Q(x) = \partial^\alpha P^{r_Q}(x) + o\bra{\abs{x-r_Q}^{m-\abs{\alpha}}}
		\text{ as } x \to r_Q
		\text{ for all } \abs{\alpha} \leq m\,.
		\end{equation*}
		The compatibility condition \eqref{eq.wc} then implies that 
		\begin{equation*}
		\partial^\alpha F_Q(\hat{x}) \to \partial^\alpha P^{\hat{x}}(\hat{x})
		\end{equation*}
		uniformly along any sequence of cubes $ Q \in \W_E $ converging to $ \hat{x} \in E $\footnote[2]{Here we define $ \dist(x,F) = \inf\set{\abs{x - y} : y \in F} $ for $ x \in \Rn $ and $ F \subset \Rn $ closed.}. Therefore, $ F $ is $ C^m_{loc} $ near $ E $ and $ \jet^m_x F = P^x $ for each $ x \in E $. Since $ E $ is compact, we can conclude that $ F \in C^m(\Rn) $.

		This completes the sketch of the proof.

	\end{proof}

	\subsection{Properties of $ \G_{\ell} $}

	Recall Definition \ref{def.G} and \eqref{eq.phi0}. For the rest of this section, we fix $ m = 1 $. We write $ \P $ for $ \P^1 $, $ \jet_x $ for $ \jet_x^1 $, and $ \G_f(x) $ for $  \G_f^{(1)}(x) $.

	\begin{lemma}\label{lem.GH}
		If $ f(x) > 0 $, then $ \G_f(x) = H_f(x) $. If $ f(x) = 0 $, then $ \G_f(x) = \set{0} $. In particular, for each $ x \in E $, $ \G_f(x) $ is a Glaeser fiber (see Definition \ref{def.GF}).
	\end{lemma}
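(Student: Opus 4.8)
The plan is to analyze the two cases $f(x)>0$ and $f(x)=0$ directly from the definition of $\G_f(x)$, and then conclude the Glaeser-fiber property in each case.

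\emph{Case $f(x)>0$.} First I would show $\G_f(x)\subseteq H_f(x)$. This is immediate: if $P\in\G_f(x)$ then $P=\jet_x F$ for some $F\in C^1_+(\Rn)$ with (implicitly) $F(x)=f(x)$, so $P(x)=F(x)=f(x)$, i.e. $P\in H_f(x)$. For the reverse inclusion $H_f(x)\subseteq\G_f(x)$, take any $P\in H_f(x)$, so $P(y)=f(x)+\ell\cdot(y-x)$ for some linear functional $\ell$, with $P(x)=f(x)>0$. I would construct a nonnegative $C^1$ function with this $1$-jet at $x$ by a local truncation argument: near $x$, $P$ itself is positive (since $P(x)>0$ and $P$ is continuous), so on a small ball $B(x,r)$ on which $P\geq f(x)/2>0$ we can keep $F=P$; outside a slightly larger ball we taper $F$ down to a fixed positive constant using a smooth cutoff, arranging that $F$ stays bounded with bounded derivative and, crucially, stays nonnegative. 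Concretely, set $F=\theta P+(1-\theta)c$ where $c=f(x)/2$ and $\theta$ is a $C^\infty$ bump equal to $1$ on $B(x,r/2)$ and supported in $B(x,r)$; on $B(x,r)$ both $P$ and $c$ are $\geq c>0$, so the convex-ish combination is $\geq 0$ there, and outside $B(x,r)$ we have $F\equiv c>0$. One checks $F\in C^1(\Rn)$ (bounded derivatives since everything is supported/constant outside a compact set) and $\jet_x F=\jet_x P=P$ because $\theta\equiv 1$ near $x$. Hence $P\in\G_f(x)$, giving $\G_f(x)=H_f(x)$.

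\emph{Case $f(x)=0$.} Here I would show $\G_f(x)=\{0\}$, where $0$ denotes the zero $1$-jet. The inclusion $\{0\}\subseteq\G_f(x)$ is trivial (the zero function is in $C^1_+(\Rn)$ and has zero jet at $x$, and $f(x)=0$). For the reverse inclusion, suppose $P=\jet_x F$ with $F\in C^1_+(\Rn)$ and $F(x)=f(x)=0$. Then $F$ attains a global minimum at $x$, so $\nabla F(x)=0$; since $m=1$, $P(y)=F(x)+\nabla F(x)\cdot(y-x)=0$. Thus $\G_f(x)=\{0\}$. (This is the usual first-order obstruction for nonnegative functions: at a zero of a nonnegative $C^1$ function the gradient must vanish.)

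\emph{Glaeser-fiber conclusion.} In the first case $\G_f(x)=H_f(x)=f(x)+\m_0(x)$ in the notation of \eqref{m0} (with $m=1$): indeed $H_f(x)=\{P:P(x)=f(x)\}$ is the translate by the constant polynomial $f(x)$ of the ideal $\m_0(x)=\{\phi\in\Rj_x^1:\phi(x)=0\}$, which is an ideal in $\Rj_x^1$ (product of a jet vanishing at $x$ with any jet again vanishes at $x$). In the second case $\G_f(x)=\{0\}=0+\{0\}$, and $\{0\}$ is trivially an ideal. Either way $\G_f(x)$ has the form $P^x+I(x)$ with $I(x)$ an ideal, so it is a Glaeser fiber in the sense of Definition \ref{def.GF}.

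\emph{Main obstacle.} The only nonroutine point is the construction in the $f(x)>0$ case: one must produce a \emph{globally} nonnegative, \emph{globally} $C^1$-bounded function realizing an arbitrary jet $P\in H_f(x)$, and the linear polynomial $P$ is neither bounded nor nonnegative on all of $\Rn$. The cutoff-to-a-positive-constant trick handles both issues simultaneously, and the fact that $m=1$ (so $P(x)>0$ forces $P>0$ on a whole neighborhood) is exactly what makes the patched function nonnegative; this is where the argument would genuinely break for $m\geq 2$.
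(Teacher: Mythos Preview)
Your proof is correct and follows essentially the same approach as the paper: a cutoff construction in the case $f(x)>0$ and the first-order minimum argument in the case $f(x)=0$. The only cosmetic difference is that the paper takes $F=\chi\cdot P$ with $\chi$ a $C^1_+$ bump supported where $P\geq 0$ (so $F\equiv 0$ outside), whereas you interpolate to a positive constant via $F=\theta P+(1-\theta)c$; both produce a globally nonnegative $C^1$ function with the prescribed jet, and neither is materially simpler than the other.
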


	\begin{proof}
		Suppose $ f(x) > 0 $. It is clear that $ \G_f(x) \subset H_f(x) $. It suffices to show the reverse inclusion. Let $ P \in H_f(x) $. Then $ P(x) = f(x) > 0 $. Since $ P $ is continuous, there exists $ \delta > 0 $ such that $ P \geq 0 $ on $ B(x,\delta) $. Let $ \chi $ be a $ C^1_+ $-cutoff function such that $ \chi \equiv 1 $ near $ x $ and $ \supp (\chi) \subset B(x,\delta) $. Then $ F := \chi \cdot P \in C^1_+(\Rn) $ with $ \jet_x F = P $. Therefore, $ H_f(x) \subset \G_f(x) $. 
		
		Suppose $ f(x) = 0 $. It is clear that the zero polynomial $ 0 \in \G_f(x) $. Suppose $ P \in \G_f(x) $, then there exists $ F \in C^1_+(\Rn) $ such that $ \jet_x F = P $. Since $ F \geq 0 $ on $ \Rn $, $ F $ has a local minimum at $ x $, so $ \nabla F(x) = 0 $. Hence, $ P \equiv 0 $. 
	\end{proof}

	\begin{lemma}\label{lem.ideal}
		Let $ \ell \geq 0 $. For each $ x \in E $, $ \G_{\ell}(x) $ is a Glaeser fiber.
	\end{lemma}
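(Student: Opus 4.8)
The plan is a short induction on $ \ell $ that simply bolts together the two structural facts already established. For the base case $ \ell = 0 $ we have $ \G_0(x) = \G_f(x) $, and Lemma \ref{lem.GH} already exhibits this set in the form required by Definition \ref{def.GF}: if $ f(x) > 0 $ then $ \G_f(x) = H_f(x) = f(x) + I(x) $ with $ I(x) = \set{ \phi \in \Rj_x^1 : \phi(x) = 0 } $, and this $ I(x) $ is an ideal in $ \Rj_x^1 $ because for $ \phi $ with $ \phi(x) = 0 $ and any $ \tau \in \Rj_x^1 $ one has $ (\phi \jm_x^1 \tau)(x) = \jet_x^1(\phi\tau)(x) = \phi(x)\tau(x) = 0 $; if $ f(x) = 0 $ then $ \G_f(x) = \set{0} = 0 + \set{0} $ and $ \set{0} $ is the zero ideal. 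Hence $ \G_0(x) $ is a Glaeser fiber for every $ x \in E $.

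For the inductive step, assume $ \G_\ell(x) $ is a Glaeser fiber for every $ x \in E $. By construction $ \G_{\ell+1}(x) $ is the Glaeser refinement of $ \G_\ell(x) $ in the sense of Definition \ref{def.GR} (with $ m = 1 $), so Lemma \ref{lem.glaeser form} applies directly and gives that $ \G_{\ell+1}(x) $ is again a Glaeser fiber for every $ x \in E $. This closes the induction and hence proves the lemma.

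I do not expect a genuine obstacle here: all the substantive work lives in Lemma \ref{lem.GH} (the base case, which also relies on the $ m=1 $ fact that a nonnegative $ C^1 $ extension is forced to have vanishing gradient at a zero of $ f $) and in Lemma \ref{lem.glaeser form} (preservation of the Glaeser-fiber property under a single refinement), and the statement is exactly the iterate of these two. The only point requiring a little care is to phrase the base case so that it literally matches the hypothesis of Lemma \ref{lem.glaeser form}, namely that $ \G_0(x) $ is presented as $ P^x + I(x) $ with $ I(x) $ an ideal; Lemma \ref{lem.GH} already does this, so no further computation inside $ \Rj_x^1 $ is needed.
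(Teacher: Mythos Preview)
Your proposal is correct and matches the paper's own proof exactly: the paper also invokes Lemma~\ref{lem.GH} for the base case and Lemma~\ref{lem.glaeser form} for the inductive step, with no additional ingredients. Your version merely spells out a bit more explicitly why the sets in Lemma~\ref{lem.GH} have the form $P^x + I(x)$ with $I(x)$ an ideal, which the paper leaves implicit.
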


	\begin{proof}
		We have shown in Lemma \ref{lem.GH} that $ \G_0(x) $ is a Glaeser fiber for each $ x \in E $. Therefore, the Lemma follows from Lemma \ref{lem.glaeser form}.
	\end{proof}

	\begin{remark}
		A subtle difference between Lemma \ref{lem.ideal} and Lemma 2.1 in \cite{F06} is that $ \G_l(x) $ is a translate of an ideal that possibly depends on the function $ f $.
	\end{remark}

	\begin{lemma}\label{lem.stab}
		For each $ x \in E $, $ \G_{\ell^*}(x) = \G_{2n+3}(x) $ for all $ \ell^* \geq 2n+3 $.
	\end{lemma}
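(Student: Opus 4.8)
The plan is to obtain this lemma as an immediate consequence of the two preceding results, with no new argument required. By Lemma \ref{lem.ideal}, for every $\ell \geq 0$ and every $x \in E$ the set $\G_\ell(x)$ is a Glaeser fiber. Thus the standing hypothesis of Lemma \ref{lem.stabPhi} is met by taking $\Phi_\ell := \G_\ell$, and we are free to invoke its conclusion.

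Next I would compute the stabilization threshold for the case at hand. We have fixed $m = 1$, so $\P = \P^1$ is spanned by $1, x_1, \dots, x_n$ and hence $\dim \P^1 = n+1$. The threshold appearing in Lemma \ref{lem.stabPhi} is $\ell^* = 2\dim\P^m + 1$, which in our situation equals $2(n+1) + 1 = 2n+3$. Applying Lemma \ref{lem.stabPhi} verbatim therefore yields $\G_\ell(x) = \G_{2n+3}(x)$ for every $x \in E$ and every $\ell \geq 2n+3$, which is precisely the assertion (after relabelling $\ell$ as $\ell^*$).

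I do not expect any genuine obstacle here; the substantive content has already been carried out in Lemmas \ref{lem.glaeser form}--\ref{lem.stabPhi} (that Glaeser fibers persist under refinement) and Lemmas \ref{lem.GH}--\ref{lem.ideal} (that the initial fibers $\G_0(x)$ are Glaeser fibers, so all $\G_\ell(x)$ are). The only point worth a sentence of care is the nesting $\G_{\ell+1}(x) \subseteq \G_\ell(x)$, which makes the word ``stabilize'' meaningful: this is immediate from Definition \ref{def.GR}, since any $P_0 \in \Phi_{\ell+1}(x_0)$ is required to satisfy $P_0 \in \Phi_\ell(x_0)$ (the constraint $P_j \in \Phi_\ell(x_j)$ runs over $j = 0, \dots, \ksh$, including $j = 0$), and in any case it is already built into the statement of Lemma \ref{lem.stabPhi}.
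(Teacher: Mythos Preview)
Your proposal is correct and follows exactly the paper's own argument: invoke Lemma~\ref{lem.ideal} to see that every $\G_\ell(x)$ is a Glaeser fiber, then apply Lemma~\ref{lem.stabPhi} with $2\dim\P^1+1=2n+3$. The only difference is that you spell out the dimension count and the nesting $\G_{\ell+1}\subset\G_\ell$, both of which the paper leaves implicit.
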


	\begin{proof}
		By Lemma \ref{lem.ideal}, $ \G_{\ell}(x) $ is a Glaeser fiber for each $ x \in E $ and $ \ell \geq 0 $. Therefore, by Lemma \ref{lem.stabPhi}, $ \G_{\ell^*}(x) $ is a stabilized Glaeser fiber if $ \ell^* \geq 2\dim\P + 1 = 2n+3 $.
	\end{proof}

	\section{Proof of the main theorem}

	In this section, we fix $ m = 1 $. We write $ \P $ for $ \P^1 $, $ \jet_x $ for $ \jet_x^1 $, and $ \G_f(x) $ for $  \G_f^{(1)}(x) $.

	\begin{proof}[Proof of Theorem \ref{thm.main}]
		First, we want to show that under the hypotheses of Theorem \ref{thm.main}, the hypotheses of Theorem \ref{thm.Fef06} are satisfied. 
		
		Let $ \ell^* \geq 2n+3 $. Then, $ \G_{\ell^*}(x) $ is a Glaeser fiber, thanks to Lemma \ref{lem.ideal}. By Lemma \ref{lem.stab}, $ \G_{\ell^*}(x) $ is its own Glaeser refinement. Hence, the hypotheses of Theorem \ref{thm.Fef06} are satisfied. 
		
		By Theorem \ref{thm.Fef06}, there exists $ F_0 \in C^1(\Rn) $, not necessarily nonnegative, such that
		\begin{equation}
		\jet_x F_0 \in \G_{\ell^*}(x) \subset \G_0(x) = \G_f(x)\,.
		\label{pjet}
		\end{equation}
		Consider the family of polynomials
		\begin{equation*}
		\mathcal{F} := \set{P^x = \jet_x F_0 : x \in E}\,.
		\end{equation*}
		By Taylor's theorem, $ \mathcal{F} $ satisfies \eqref{eq.wc}. Thanks to \eqref{pjet}, $ \mathcal{F} $ satisfies the hypothesis of Theorem \ref{thm.nw} (with $ m = 1 $). Therefore, there exists $ F \in C^1_+(\Rn) $, such that $ \jet_x F = \jet_x F_0 $ for each $ x \in E $. In particular, $ F(x) = f(x) $. This concludes the proof. 
	\end{proof}

	\begin{remark}
		In \cite{KZ07}, the authors showed that for $ C^1(\Rn) $ without the nonnegative constraint, it suffices to take $ \ksh = 2 $ in the first refinement and $ \ksh = 1 $ in the subsequent refinements, and the number of refinement $ \ell^* $ till stabilization can be reduced to $ n \leq \ell^* \leq n+1 $. It will be interesting to see if these bounds still hold for $ C^1_+(\Rn) $. 
	\end{remark}


	\bibliographystyle{plain}

\end{document}